\definecolor{dark-red}{rgb}{0.7,0.25,0.25}
\definecolor{dark-blue}{rgb}{0.15,0.15,0.55}
\definecolor{medium-blue}{rgb}{0,0,.8}
\definecolor{DarkGreen}{RGB}{0,150,0}
\definecolor{rho}{named}{red}
\newcommand{\cB}{\mathcal{B}}
\newcommand{\D}{\mathbb{D}}
\newcommand{\cJ}{\mathcal{J}}
\newcommand{\N}{\mathbb{N}}
\newcommand{\R}{\mathbb{R}}
\newcommand{\C}{\mathbb{C}}
\renewcommand{\Re}{\operatorname{Re}}
\newcommand{\Span}{\operatorname{span}}
\newcommand{\End}{\operatorname{End}}
\newcommand{\rank}{\operatorname{rank}}
\newcommand{\length}{\operatorname{length}}
\newcommand{\Hilb}{\operatorname{Hilb}}
\newcommand{\isoarrow}{\overset{\sim}{\longrightarrow}}
\newcommand{\abs}[1]{\left|#1\right|}
\newcommand{\norm}[1]{\left\|#1\right\|}
\newtheorem{thmalpha}{Theorem}
\newtheorem{Theorem}{Theorem}[section]
\newtheorem*{Theorem*}{Theorem}
\newtheorem{Lemma}[Theorem]{Lemma}
\newtheorem{Proposition}[Theorem]{Proposition}
\newtheorem{Corollary}[Theorem]{Corollary}
\theoremstyle{definition}
\newtheorem{Remark}[Theorem]{Remark}
\newtheorem*{Definition*}{Definition}
\newtheorem{Question}[Theorem]{Question}
\numberwithin{equation}{section}
\numberwithin{figure}{section}
\begin{document}

\title[]{Singular values of weighted composition operators and second quantization}

\subjclass[2010]{47B33, 81T40, 30H10}

\author{Mihai Putinar}
\address{Dept. of Mathematics\\
    University of California\\
    Santa Barbara, CA 93106-6105\\
    USA}
\address{
   Dept. of Mathematics and Statistics\\
   Newcastle University\\
  NE1 7RU, United Kingdom
}
\email{mputinar@math.ucsb.edu}

\author{James E. Tener}
\address{Dept. of Mathematics\\
    University of California\\
    Santa Barbara, CA 93106-6105\\
    USA}
\email{jtener@math.ucsb.edu}

%\thanks{}

\begin{abstract}
We study a semigroup of weighted composition operators on the Hardy space of the disk $H^2(\D)$, and more generally on the Hardy space $H^2(U)$ attached to a simply connected domain $U$ with smooth boundary.
Motivated by conformal field theory, we establish bounds on the singular values (approximation numbers) of these weighted composition operators. 
As a byproduct we obtain estimates on the singular values of the restriction operator (embedding operator) $H^2(V) \to H^2(U)$ when $U \subset V$ and the boundary of $U$ touches that of $V$.
Moreover, using the connection between the weighted composition operators and restriction operators, we show that these operators exhibit an analog of the Fisher-Micchelli phenomenon for non-compact operators.
\end{abstract}
\maketitle

\section{Introduction}

There is a fundamental link between the free fermion conformal field theory and complex function theory.
Indeed, in Graeme Segal's landmark paper \cite{SegalDef} he described how to construct the free fermion CFT as the second quantization of the Hardy spaces of planar domains with smooth boundary (or, more generally, Riemann surfaces with smooth boundary).
The properties of the second quantized operators are derived from analytic properties of operators on these Hardy spaces.
For example, in \cite{Ten16a} the boundedness and compactness of second quantized operators are derived from a careful study of the Cauchy transform.
Moreover, in \cite{Ten16b} it was shown that a necessary ingredient of the dictionary between geometric and algebraic formulations of the free fermion conformal field theory is the boundedness of the norms of the exterior powers of certain weighted composition operators $W_{\varphi}$, which we define below, on the Hardy space $H^2(\D)$.
By Theorem \ref{thmLambdaBoundedness}, the boundedness of these exterior power operators $\Lambda(W_\varphi)$ is characterized by the decay rate of the singular values (sometimes called \emph{approximation numbers} in the non-compact case) $s_n(W_\varphi)$ of the prequantized operator $W_\varphi$.

Inspired by these results, we perform a general study of the spectral properties of these weighted composition operators, and relatedly the spectral analysis of the restriction operator between two Jordan domains with smooth boundary.
We also generalize a celebrated theorem of Fisher-Micchelli \cite{FisherMicchelli80} concerning n-widths in Hardy space to the limiting case of a non-compact restriction operator. 
It is surprising, and gratifying, that some qualitative features (decay and simplicity of eigenvectors plus the oscillation of eigenvalues) persist in this new situation.

The following theorem summarizes our main results, which we prove in the body of the paper in Theorem \ref{thmSingularValues} and Theorem \ref{thmFisherMicchelli}.
\begin{thmalpha}\label{thmIntroSummary}
Let $U$ and $V$ be Jordan domains with $C^\infty$ boundary, and let $\varphi:U \to V$ be a univalent map such that $\varphi(U)$ is also a Jordan domain with $C^\infty$ boundary.
For either choice of holomorphic square root of $\varphi^\prime$, define the weighted composition operator 
$$
W_\varphi:H^2(V) \to H^2(U)
$$
by $(W_\varphi f)(z) = \varphi^\prime(z)^{1/2} f(\varphi(z))$.
Then the following hold:
\begin{enumerate}
\item The essential norm of $W_{\varphi}$ is given by
$$
\norm{W_\varphi}_e = \left\{ 
\begin{array}{cl}
1, & \mbox{if } \, \partial \varphi(U) \cap \partial V \ne \emptyset,\\
0, & \mbox{if } \,\partial \varphi(U) \cap \partial V = \emptyset.
\end{array}
\right.
$$
\item The singular values of $W_\varphi$ satisfy 
$
s_n(W_\varphi) \le 1 + \frac{K}{n}
$ for some positive constant $K$.
\item If $\varphi(U) \ne V$ and $\lambda$ is an eigenvalue of $\abs{W_\varphi}$, then $\lambda$ has multiplicity one.
\item If $\lambda_0 > \lambda_1 > \cdots > \norm{W_\varphi}_e$ are the eigenvalues of $\abs{W_{\varphi}}$ exceeding $\norm{W_{\varphi}}_e$, then the eigenfunction corresponding to the eigenvalue $\lambda_n$ has at most $n$ zeroes in $V$.
\end{enumerate}
In particular, if $U \subseteq V$ and $\iota:U \to V$ is the inclusion map, then the restriction operator $R:H^2(V) \to H^2(U)$ is given by $R = W_\iota$ for the appropriate choice of square root, and thus the above results apply to $R$.
\end{thmalpha}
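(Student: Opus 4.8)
The plan is to reduce every assertion to the restriction operator and then to analyze that operator by a combination of reproducing‑kernel estimates, a local conformal model at the touching points, and oscillation theory. Since $\varphi$ is a conformal bijection of $U$ onto the $C^\infty$ Jordan domain $\Omega := \varphi(U)$, the weighted composition operator $W_{\varphi_0}:H^2(\Omega)\to H^2(U)$ associated to $\varphi_0 = \varphi$ (viewed with codomain $\Omega$) is unitary: the weight $\varphi'^{1/2}$ is exactly the factor making the arc-length change of variables $w=\varphi(z)$ an isometry. Factoring $W_\varphi = W_{\varphi_0}\,R$, where $R:H^2(V)\to H^2(\Omega)$ is restriction, gives $W_\varphi^* W_\varphi = R^* R$, hence $\abs{W_\varphi} = \abs{R}$ and $s_n(W_\varphi)=s_n(R)$ for all $n$. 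Thus all four assertions follow once they are established for restriction operators $R:H^2(V)\to H^2(\Omega)$ with $\Omega\subseteq V$ a $C^\infty$ Jordan domain, and the concluding ``in particular'' statement is simply the case $\varphi=\iota$, $\Omega=U$.

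For (1) I would split into two cases. If $\partial\Omega\cap\partial V=\emptyset$ then $\overline{\Omega}$ is a compact subset of $V$, the Szeg\H{o} kernel $S_V(z,w)$ is smooth on $\overline{\Omega}\times\overline{\Omega}$, so $RR^*$ is an integral operator on $\partial\Omega$ with smooth kernel, hence trace class; thus $R$ is compact and $\norm{R}_e=0$. If instead $p\in\partial\Omega\cap\partial V$, I would build a singular sequence of unit vectors concentrating at $p$ (normalized Szeg\H{o} kernels $S^V_{w_j}/\norm{S^V_{w_j}}$ with $w_j\to p$ nontangentially): these tend weakly to $0$, and because $\partial\Omega$ and $\partial V$ are internally tangent at $p$, hence agree to first order, the arc-length masses over the two boundaries near $p$ agree asymptotically, giving $\norm{Rf_j}/\norm{f_j}\to 1$ and so $\norm{R}_e\ge 1$. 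The matching bound $\norm{R}_e\le 1$ I would obtain from a boundary partition of unity: pieces supported away from the finitely many touching points localize to a compact subset of $V$ and are compact, while near each touching point the tangency forces the local norm ratio to $1$. Making this upper bound clean is one delicate point.

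Assertion (2) rests on a local model at each touching point $p$. A M\"obius transformation sending $p$ to $\infty$ carries the internally tangent $C^\infty$ curves $\partial\Omega,\partial V$ to curves asymptotic to two parallel lines, so the model operator is the restriction between Hardy spaces of two parallel half-planes, which under the Fourier--Laplace transform becomes multiplication by $e^{-a\xi}$ on $L^2(0,\infty)$---continuous spectrum $[0,1]$, explaining the essential norm $1$. The eigenvalues of $\abs{R}$ exceeding $1$, together with the rate $s_n\le 1+K/n$, arise from the curvature and higher-order tangency perturbing this model. Concretely I would prove the bound by exhibiting rank-$n$ operators $F_n$ with $\norm{R-F_n}\le 1+K/n$, obtained by truncating a Szeg\H{o}-kernel expansion localized at the touching points and comparing, through domain monotonicity of singular values, with an explicitly solvable tangent-disk configuration. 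Pinning down the sharp $1/n$ rate uniformly in the geometry is the main analytic obstacle here.

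Finally, (3) and (4) I read as an \emph{oscillation theorem} for the self-adjoint operator $RR^*$ (equivalently $R^*R$). The plan is to show that its integral kernel, built from $S_V$ restricted to $\partial\Omega$, is an oscillation (sign-regular, totally positive) kernel; by the Gantmacher--Krein--Kellogg theory the eigenvalues above the essential spectrum are then simple---this is where $\Omega\ne V$ enters, so that $R$ is not unitary---and the eigenfunction for the $n$-th eigenvalue inherits the Sturm oscillation property. The subtlest remaining step is to convert this \emph{real} oscillation statement into the claimed count of \emph{complex} zeros in $V$: since the eigenfunctions lie in $H^2(V)$, I would use the argument principle to identify the number of interior zeros with the winding of the boundary values, and bound the latter by the number of sign changes furnished by the oscillation property, yielding at most $n$ zeros for the $n$-th eigenfunction. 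This is precisely the analytic analogue of the Fisher--Micchelli phenomenon; I expect establishing the sign-regularity of the kernel and carrying out the argument-principle bookkeeping to be the crux, with the connection to Hankel operators and AAK theory serving as a guide.
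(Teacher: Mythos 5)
Your opening reduction $\abs{W_\varphi}=\abs{R}$ via the unitarity of $W_\varphi:H^2(U)\to H^2(\varphi(U))$ is exactly the paper's Proposition \ref{propRModulus} and is correct. The genuine gap is that the analytic core of parts (1) and (2) --- the upper bounds $\norm{W_\varphi}_e\le 1$ and $s_n(W_\varphi)\le 1+K/n$ --- is precisely what you defer as ``delicate'' and ``the main analytic obstacle,'' and the route you sketch is unlikely to close it. You implicitly assume $\partial\varphi(U)\cap\partial V$ consists of finitely many tangency points, which is not among the hypotheses (the contact set of two $C^\infty$ Jordan curves can be an arc or a Cantor set), so the boundary partition of unity and the per-point half-plane model do not cover the general case; and no mechanism is offered that actually produces the $1/n$ rate from ``curvature and higher-order tangency.'' The paper's proof does not localize at touching points at all: after reducing to $U=V=\D$ with $\varphi(0)=0$, it writes $W_\varphi f=\varphi'(0)^{1/2}f(0)+ADW_\varphi f$, where $Az^n=\tfrac{1}{n+1}z^{n+1}$ (so $s_n(A)=1/n$), splits off the piece $AM_\psi C_\varphi$ with $\psi=((\varphi')^{1/2})'$, whose singular values are $\le\norm{\psi}_\infty/n$, and shows the remainder $T=\varphi'(0)^{1/2}f(0)+AM_{(\varphi')^{3/2}}C_\varphi Df$ is a contraction by the Littlewood--Paley identity combined with the Schwarz--Pick lemma. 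That single global identity is the missing idea; without it (or a substitute) assertion (2) is not proved. For the lower bound $\norm{W_\varphi}_e\ge 1$, your claim that the arc-length masses ``agree asymptotically'' at a tangency is exactly the point where the paper must invoke the Julia--Carath\'eodory theorem; it is not automatic from first-order contact.

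For (3) and (4) your proposed route through total positivity of the kernel is also not carried out and is doubtful as stated: $K_V(z,w)$ restricted to $\partial\varphi(U)$ is complex-valued, so the Gantmacher--Krein oscillation theory does not apply directly, and you flag the sign-regularity and the passage from sign changes to complex zeros as the crux without resolving either. The paper takes a different and complete path: simplicity of eigenvalues follows from the identity $\lambda^2\abs{f(z)}^2=\int_{\partial U}\abs{f(w)}^2\,\Re\!\left(\frac{z+w}{z-w}\right)ds(w)>0$ for $z$ in the free arc $S$ (the interior of $\partial\D\setminus\partial U$), so eigenfunctions cannot vanish on $S$ and two independent eigenfunctions for the same eigenvalue would yield a contradiction; the zero count is obtained by approximating $\varphi$ by $\varphi_t(z)=\varphi(tz)$, whose images are compactly contained so the classical compact Fisher--Micchelli theorem applies and gives exactly $n$ zeros, then proving $s_n(W_{\varphi_t})\to s_n(W_\varphi)$ via the identity $\norm{\Lambda^n(T)}=\prod_{j=1}^n s_j(T)$, deducing strong convergence of the rank-one spectral projections, and finishing with Hurwitz's theorem. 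You would need to either supply the total-positivity argument in full (including the complex-to-real bookkeeping) or adopt an approximation scheme of this kind.
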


In the case where $(\phi_t)_{t \ge 0}$ is a sufficiently nice semigroup of maps as above with an interior fixed point, then we can show the stronger result that $\prod \max(1,s_n(W_{\varphi_t})) < \infty$ for all $t \ge 0$ as a consequence of \cite{Ten16b}.
The proof, however, is highly indirect, and requires relating the exterior power semigroup $\Lambda(W_{\varphi_t})$ to a semigroup of operators generated by a smeared Virasoro field and applying the `quantum energy inequality' of \cite{FewsterHollands05} to obtain a one-sided bound the spectrum of the real part of the generator.
It would be very interesting to have a direct proof of this fact, in the spirit of the analysis performed to obtain Theorem \ref{thmIntroSummary}, as well as a generalization of the result which applies to maps $\varphi$ not lying in semigroups.

Intriguingly, the link which we explore between singular values of weighted composition operators and two-dimensional conformal field theory is not the only such connection.
Very recently, singular values of these operators were studied (with emphasis on \emph{compact} operators) in \cite{LLQR16}, in the context of modular nuclearity for Borchers triples, a very different application than the one explored here.
The simultaneous appearance of two distinct applications of singular values of weighted composition operators to conformal field theory reveals their importance, and suggests further work aimed at understanding the relationship between these connections.

The paper is organized as follows.
In Section \ref{secSingularValues}, we introduce the weighted composition operators $W_\varphi$ and establish estimates for their singular values.
In Section \ref{secSecondQuantization} we discuss the relation between singular values of $W_\varphi$ and the boundedness of $\Lambda(W_\varphi)$ which is required for applications in conformal field theory.
In Section \ref{secRestriction} we exploit the relationship between the weighted composition operators $W_\varphi$ and restriction operators to establish a Fisher-Micchelli-type theorem for non-compact $W_\varphi$.

\subsection*{Acknowledgements}
James Tener is grateful to the Max Planck Institute for Mathematics, Bonn, for its hospitality and support.

\section{Singular values of a family of weighted composition operators}\label{secSingularValues}

Let $U \subset \C$ be a bounded, simply connected domain.
For a prescribed point $z_0 \in U$, one defines the Hardy space $H^2(U, \omega_{z_0})$ in terms of the harmonic measure $\omega_{z_0}$ with pole at $z_0$. 
If $(U, z_0)$ and $(V,w_0)$ are a pair of such domains, and $\varphi:U \to V$ is a holomorphic map with $\varphi(z_0) = w_0$, then it is natural to consider the composition operator $C_\varphi:H^2(V,\omega_{w_0}) \to H^2(U, \omega_{z_0})$ given by $C_\varphi f = f \circ \varphi$.
One has $\norm{C_\varphi} \le 1$, and $C_\varphi$ is unitary when $\varphi$ is invertible.

Alternatively, if $U$ has $C^\infty$ boundary boundary there is another notion of Hardy space, which we denote $H^2(U)$, obtained as the closure in $L^2(\partial U, ds)$ of the subspace of
smooth functions on $\overline{U}$ which are holomorphic in $U$.
Here $ds$ is arclength measure on $\partial U$.
This is equivalent to working with holomorphic functions on $U$ with almost everywhere non-tangential $L^2$ boundary values (see \cite[\S 5]{Bell92}).
As topological vector spaces, $H^2(U)$ and $H^2(U, \omega_{z_0})$ coincide, but it is an interesting and subtle problem to study their differences as Hilbert spaces.

The second version of the Hardy space is most natural to consider when we regard
$U$ endowed with a spin structure (which is unique up to isomorphism).
In this case, a map $H^2(V) \to H^2(U)$ should be obtained from a morphism $U \to V$ of \emph{spin structures}, not just a map of the underlying domains.
When $\varphi:U \to V$ is a locally injective holomorphic map,  a morphism of spin structures is obtained by choosing a holomorphic square root of the derivative $\varphi^\prime$.
Given such a $\varphi: U \to V$ equipped with a choice of $(\varphi^\prime)^{1/2}$, we define the weighted composition operator
$W_{\varphi}:H^2(V) \to H^2(U)$ by $(W_{\varphi}f)(z) = \varphi^\prime(z)^{1/2} f(\varphi(z))$.
Note that $W_{\varphi_1} W_{\varphi_2} = W_{\varphi_2 \circ \varphi_1}$ when the square roots are chosen compatibly.
Analogous to the case of composition operators and harmonic measure, $W_\varphi$ is unitary when $\varphi$ is invertible.

Let $\cJ^\infty$ denote the set of bounded, simply connected Jordan domains $U \subset \C$ with $C^\infty$ boundary, and for $U,V \in \cJ^\infty$ let $\cJ^\infty(U, V)$ denote the set of univalent maps $\varphi:U \to V$, equipped with a choice of square root of $\varphi^\prime$, such that $\varphi(U) \in \cJ^\infty$.
For $\varphi \in \cJ^\infty(U,V)$, the weighted composition operator $W_\varphi:H^2(V) \to H^2(U)$ is bounded, as $\varphi$ extends to a diffeomorphism $\partial U \isoarrow \partial \varphi(U)$, and in particular $\varphi^\prime$ extends continuously to $\overline{U}$.
Unlike the situation with harmonic measure, however, $W_\varphi$ is not a contraction in general.
Indeed,
$$
\norm{W_{\varphi}} \ge \frac{\norm{W_{\varphi} 1}_{H^2(U)}}{\norm{1}_{H^2(V)}} = \left(\frac{\length(\partial \varphi(U))}{\length(\partial V)}\right)^{\frac12},
$$
and thus $\norm{W_{\varphi}}$ may be arbitrarily large.

On the other hand, we will show that $W_\varphi$ is an \emph{essential} contraction.
Recall that the \emph{singular values} (often called \emph{approximation numbers} in the case of non-compact operators) of $T \in \cB(H,K)$ are defined by
$$
s_n(T) = \inf \{ \norm{T - F} \, : \, F \in \cB(H,K), \, \rank(F) < n\},
$$
and the \emph{essential norm} of $T$ is given by $\norm{T}_e := \lim_{n \to \infty} s_n(T)$. 
The classical definition of essential norm
$$
\norm{T}_e = \inf \{ \norm{T - L} \, : \, L \in \cB(H,K), \, L \ {\rm compact} \}
$$
is equivalent to the above limit of singular values, and it is obvious that it 
only depends on the modulus $|T| = \sqrt{T^\ast T}$ of $T$.

\begin{Theorem}\label{thmSingularValues}
Let $U,V \in \cJ^\infty$ and let $\varphi \in \cJ^\infty(U,V)$.
Then
$$
\norm{W_\varphi}_e = \left\{ 
\begin{array}{cl}
1, & \mbox{if } \, \partial \varphi(U) \cap \partial V \ne \emptyset,\\
0, & \mbox{if } \,\partial \varphi(U) \cap \partial V = \emptyset.
\end{array}
\right.
$$
Moreover, there exists a positive constant $K_\varphi $ with the property
$$
s_n(W_\varphi) \le 1 + \frac{K_\varphi}{n},
$$
for all $n \geq 1$.
\end{Theorem}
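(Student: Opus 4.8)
The plan is to first strip away the unitary parts of $W_\varphi$ and reduce to a single model operator on $H^2(\D)$. Writing $\Omega = \varphi(U)\in\cJ^\infty$, the map $\varphi:U\to\Omega$ is a conformal bijection, so (as recorded above) the weighted composition operator $U_\varphi:H^2(\Omega)\to H^2(U)$ is unitary and $W_\varphi = U_\varphi R_\Omega$, where $R_\Omega:H^2(V)\to H^2(\Omega)$ is the restriction operator. Since $s_n$ and $\norm{\cdot}_e$ are invariant under composition with unitaries, it suffices to treat $R_\Omega$. Composing further with the unitaries induced by Riemann maps $V\to\D$ and $\Omega\to\D$ (which extend smoothly to the boundary by Kellogg--Warschawski, as $\partial V,\partial\Omega$ are $C^\infty$) turns $R_\Omega$ into a weighted composition operator $W_b$ on $H^2(\D)$ whose symbol $b:\D\to\D$ is univalent and smooth up to $\partial\D$, with $b(\D)=\Omega'\subseteq\D$; the hypothesis $\partial\varphi(U)\cap\partial V\ne\emptyset$ becomes $\partial\Omega'\cap\partial\D\ne\emptyset$, i.e. $\abs{b}=1$ somewhere on $\partial\D$. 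A change of variables on the boundary gives the basic identity $\ip{W_b^\ast W_b f, f} = \int_{\partial\Omega'}\abs{f}^2\,ds$, so everything is controlled by comparing the Hardy norm of $f$ on the inner curve $\partial\Omega'$ with its norm on $\partial\D$.

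For the singular value bound I would use the explicit finite-rank approximation $F_n = W_b P_n$, where $P_n$ is the orthogonal projection of $H^2(\D)$ onto the span of $1,z,\dots,z^{n-1}$; then $\rank F_n = n$ and $s_{n+1}(W_b)\le\norm{W_b(I-P_n)}$. Writing a unit vector $f\in(I-P_n)H^2(\D)$ as $f = z^n g$ with $\norm{g}_{H^2(\D)}=1$, the task reduces to the single scalar estimate $\int_{\partial\Omega'}\abs{\zeta}^{2n}\abs{g(\zeta)}^2\,ds(\zeta)\le 1 + K/n$, after which reindexing absorbs the shift into the constant. I would split $\partial\Omega'$ into the part where $\abs{\zeta}\le 1-\delta$ and a neighborhood of the contact set $\partial\Omega'\cap\partial\D$. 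On the former the weight obeys $\abs{\zeta}^{2n}\le(1-\delta)^{2n}$, so that piece is $O(e^{-cn})$ once the finite norm of $R_\Omega$ is used to control $\int_{\partial\Omega'}\abs{g}^2\,ds$.

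The heart of the argument, and the step I expect to be the main obstacle, is the analysis near the contact set. Here $\abs{\zeta}^{2n} = (1-(1-\abs{\zeta}^2))^n \approx e^{-n(1-\abs{\zeta}^2)}$, and since $\Omega'\subseteq\D$ touches $\partial\D$ tangentially, a half-plane model at a contact point sends $\partial\D$ to $\R$ and $\partial\Omega'$ to a curve $x\mapsto x + i\gamma(x)$ with $\gamma\ge 0$ vanishing to even order at the contact. The weight then becomes $\approx e^{-cn\gamma(x)}$, which localizes the integral to the scale $\abs{x}\lesssim n^{-1/2}$ of ordinary tangency. On this scale the leading contribution is bounded by the boundary mass of $g$ near the contact point --- using that for $H^2$ functions the $L^2$ mass along horizontal lines decreases as one moves into the half-plane (equivalently, subharmonicity of $\abs{g}^2$ together with the inward position of $\partial\Omega'$) --- while the excess produced by the arc-length Jacobian $\sqrt{1+\gamma'(x)^2}$ is $O(n^{-1})$ precisely because $\gamma'(x)^2\lesssim\abs{x}^2\lesssim n^{-1}$ on the relevant scale. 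Summing the boundary masses over the finitely many contact components (which are disjoint and contribute total mass at most $\norm{g}^2 = 1$) and adding the exponentially small far contribution yields $\int_{\partial\Omega'}\abs{\zeta}^{2n}\abs{g}^2\,ds\le 1 + K/n$, hence $s_n(W_\varphi)=s_n(W_b)\le 1 + K_\varphi/n$. Making the monotonicity-along-lines step and the transfer from the half-plane model to the true geometry uniform in $n$ is the delicate point; a subharmonic majorant alone is too lossy, since passing to the boundary discards exactly the interior decay $\abs{\zeta}^{2n}$ that produces the rate.

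Finally, the essential norm drops out of the same analysis. The bound just proved gives $\norm{W_\varphi}_e = \lim_n s_n(W_\varphi)\le 1$ in all cases. If $\partial\varphi(U)\cap\partial V=\emptyset$ then $\overline{\Omega'}\subset\D$ is compact, so the unit ball of $H^2(\D)$ is a normal family that is uniformly bounded on $\overline{\Omega'}$; restriction is therefore compact and $\norm{W_\varphi}_e=0$. If the boundaries touch, I would establish the lower bound $\norm{W_\varphi}_e\ge 1$ by testing against normalized Szeg\H{o} kernels $k_w/\norm{k_w}$ with $w\to p$ for a contact point $p\in\partial\Omega'\cap\partial\D$: these tend weakly to $0$, and the same local concentration estimate shows $\norm{W_b(k_w/\norm{k_w})}\to 1$, so no compact perturbation can bring the norm below $1$. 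Combined with the upper bound this gives $\norm{W_\varphi}_e = 1$, completing the proof.
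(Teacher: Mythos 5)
Your reduction is sound as far as it goes: factoring out the unitaries to get $W_\varphi$ unitarily equivalent to a single operator $W_b$ on $H^2(\D)$, taking the approximant $W_bP_n$, and reducing to the estimate $\sup_{\norm{g}_{H^2(\D)}\le 1}\int_{\partial\Omega'}\abs{\zeta}^{2n}\abs{g(\zeta)}^2\,ds(\zeta)\le 1+K/n$ is a legitimate strategy, and that target inequality is in fact true. But the proof of that inequality is the entire content of the theorem, and your sketch of it does not close. The monotonicity of $y\mapsto\int_\R\abs{g(x+iy)}^2\,dx$ applies to horizontal lines, not to the graph $x+i\gamma(x)$; the natural substitute, the Poisson majorant $\abs{g(x+i\gamma(x))}^2\le P[\abs{g}^2](x+i\gamma(x))$, discards exactly the interior decay $e^{-cn\gamma(x)}$ and leaves you needing $\int P_{\gamma(x)}(x-t)\,dx\le 1+K/n$ uniformly in $t$, which is not established. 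You flag this yourself as ``the delicate point,'' so the key step is acknowledged but unproven. In addition, your localization presumes that $\partial\Omega'\cap\partial\D$ consists of finitely many components with ordinary (second-order) tangency, giving the scale $\abs{x}\lesssim n^{-1/2}$; for general $C^\infty$ Jordan domains the contact set is merely a compact subset of the circle (it can be a Cantor set, and $\gamma$ can vanish to infinite order), so the case analysis is not exhaustive. The same gap propagates to your lower bound $\norm{W_\varphi}_e\ge 1$, since it invokes ``the same local concentration estimate'' for $\norm{W_b(k_w/\norm{k_w})}\to 1$.

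The paper's proof avoids this local boundary geometry entirely. After normalizing so that $U=V=\D$ and $\varphi(0)=0$, it decomposes $W_\varphi f=\varphi'(0)^{1/2}f(0)+AM_{(\varphi')^{3/2}}C_\varphi Df+AM_\psi C_\varphi f$, where $Az^n=\tfrac{1}{n+1}z^{n+1}$ and $\psi=((\varphi')^{1/2})'$. The first two terms form an operator $T$ which is an exact contraction by the Littlewood--Paley identity combined with the pointwise Schwarz--Pick bound $\abs{\varphi'(z)}\log\abs{z}/\log\abs{\varphi(z)}\le 1$; the remainder satisfies $s_n(AM_\psi C_\varphi)\le\norm{\psi}_\infty/n$ because $s_n(A)=1/n$. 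This gives $s_n(W_\varphi)\le\norm{T}+s_n(AM_\psi C_\varphi)\le 1+K/n$ uniformly over all contact configurations, and the lower bound for the essential norm comes from the identity $W_\varphi^*k_w=c_w\,k_{\varphi(w)}$ together with the Julia--Carath\'eodory theorem rather than a bare-hands concentration argument. If you want to rescue your specific approximant, note that this decomposition proves your target inequality directly: with $b(0)=0$, the operator $AM_\psi C_b(I-P_n)$ maps into $z^{n+1}H^2(\D)$, where $\norm{A}\le\tfrac{1}{n+1}$, so $\norm{W_b(I-P_n)}\le 1+\norm{\psi}_\infty/(n+1)$.
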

\begin{proof}
By composing $\varphi$ with Riemann maps for $U$ and $V$, which corresponds to multiplying $W_\varphi$ by unitary operators, we may assume without loss of generality that $U = V = \D$. 
Moreover, we may choose the Riemann maps so that $\varphi(0) = 0$.
Following convention, we will use \emph{normalized} arclength measure on the unit circle $S^1$ to define the inner product on $H^2(\D)$, but since we are rescaling the norm on both the domain and codomain the conclusions of the theorem are not affected.
If $\norm{\varphi}_\infty < 1$, so that $\partial \varphi(\D) \cap \partial \D = \emptyset$, then $W_\varphi$ is trace class and the conclusion follows.
Thus we assume that $\norm{\varphi}_\infty = 1$.

The following argument is inspired by the computation of $\norm{C_\varphi}_e$ in \cite{Shapiro87}.
Let $D$ be the unbounded operator on $H^2(\D)$ given by $Df = f^\prime$, with domain consisting of functions $f \in H^2(\D)$ with the property that $f^{(n)}$ extends continuously to $\overline{\D}$ for all $n \ge 0$.
Let $A \in \cB(H^2(\D))$ be the compact operator given by $Az^n = \frac{1}{n+1} z^{n+1}$, and observe that $s_n(A) = \frac{1}{n}$.
If $\psi \in H^\infty(\D)$, we write $M_\psi$ for the multiplication operator by $\psi$ on $\cB(H^2(\D))$.

For $f \in\C[z]$, we have
\begin{align*}
W_\varphi f &= (W_\varphi f)(0) + ADW_\varphi f\\
 &= \varphi^\prime(0)^{1/2}f(0) + AM_{(\varphi^\prime)^{3/2}} C_\varphi Df + A M_\psi C_\varphi f 
\end{align*}
where $\psi = ((\varphi^\prime)^{1/2})^\prime = \frac{\varphi^{\prime\prime}}{2(\varphi^\prime)^{1/2}}$.
By the smooth Riemann mapping theorem (see \cite[Thm. 12.1]{Bell92}), $\varphi^{\prime\prime}$ and $\frac{1}{\varphi^\prime}$ lie in $H^\infty(\D)$.
Hence 
$$
s_n(A M_\psi C_\varphi) \le \frac{\norm{\psi}_\infty}{n}.
$$

To obtain the desired estimate on $s_n(W_\varphi)$, it suffices to show that the map $T$ defined on $f \in \C[z]$ by
$$
Tf = \varphi^\prime(0)^{1/2} f(0) + AM_{(\varphi^\prime)^{3/2}} C_\varphi Df
$$
extends to a bounded operator with norm at most $1$.
To do this, we use the Littlewood-Paley formula for the $H^2(\D)$ norm (see \cite[\S 4.5(a)]{Shapiro87})
$$
\norm{f}_{H^2(\D)}^2 = \abs{f(0)}^2 + \int_\D \abs{f^\prime(z)}^2 \log(1/\abs{z}^2) \, d\lambda(z)
$$
where $\lambda$ is Lebesgue measure on $\C$, normalized so that $\lambda(\D)=1$.

Hence, for $f \in \C[z]$, we have
\begin{align}
\norm{Tf}^2 & = \abs{\varphi^\prime(0)}\abs{f(0)}^2 + \int_\D \abs{\varphi^\prime(z)}^3 \abs{f^\prime(\varphi(z))}^2 \log(1/\abs{z}^2) \, d\lambda(z) \nonumber\\
&= \abs{\varphi^\prime(0)}\abs{f(0)}^2 + \nonumber \\
&\quad \int_\D \left(\frac{\abs{\varphi^\prime(z)}\log\abs{z}}{\log\abs{\varphi(z)}}\right) \abs{\varphi^\prime(z)}^2 \abs{f^\prime(\varphi(z))}^2 \log(1/\abs{\varphi(z)}^2) \, d\lambda(z). \label{eqnTEstimate}
\end{align}
By the Schwarz-Pick lemma,
for $z \ne 0$ we have
\begin{align*}
\frac{\abs{\varphi^\prime(z)}\log\abs{z}}{\log\abs{\varphi(z)}} & = \frac{\abs{\varphi^\prime(z)}(1-\abs{z}^2)}{1-\abs{\varphi(z)}^2}\frac{\log\abs{z}}{1-\abs{z}^2}\frac{1-\abs{\varphi(z)}^2}{\log\abs{\varphi(z)}}\\
& \le \frac{\log\abs{z}}{1-\abs{z}^2}\frac{1-\abs{\varphi(z)}^2}{\log\abs{\varphi(z)}}\\
&= \frac{u(\abs{z})}{u(\abs{\varphi(z)})}
\end{align*}
where $u:(0,1) \to \R_{> 0}$ is given by $u(t) = -\log t/(1-t^2)$.
One verifies that $u$ is decreasing, and since $\abs{\varphi(z)} \le \abs{z}$ by the Schwarz lemma, we have
\begin{equation}\label{eqnPhiEstimate}
\frac{\abs{\varphi^\prime(z)}\log\abs{z}}{\log\abs{\varphi(z)}} \le 1.
\end{equation}

Combining \eqref{eqnPhiEstimate} with \eqref{eqnTEstimate}, and the fact that $\abs{\varphi^\prime(0)} \le 1$ by the Schwarz lemma, we get
\begin{align*}
\norm{Tf}^2 &\le \abs{f(0)}^2 + \int_\D  \abs{\varphi^\prime(z)}^2 \abs{f^\prime(\varphi(z))}^2 \log(1/\abs{\varphi(z)}^2) \, d \lambda(z)\\
&= \abs{f(0)}^2 + \int_{\varphi(\D)} \abs{f^\prime(z)}^2 \log(1/\abs{z}^2) \, d\lambda(z)\\
&\le \norm{f}^2.
\end{align*}
Since $W_\varphi = T + AM_{\psi}C_\varphi$ and $s_n(AM_\psi C_\varphi) \le \frac{K_\varphi}{n}$, we have established the desired estimate for $s_n(W_\varphi)$. Moreover it follows that $\norm{W_\varphi}_e \le 1$.

It remains to establish a lower bound for $\norm{W_\varphi}_e$, which we will do as in \cite{Shapiro87}.
Since $\norm{\varphi}_\infty = 1$, we may choose a sequence $w_n \in \D$ with $\abs{\varphi(w_n)} \to 1$.
By the smooth Riemann mapping theorem, $\varphi$ and all of its derivative extend continuously to the closed unit disk $\overline{\D}$.
By taking a subsequence, we may assume that $w_n$ converges to a point on the unit circle, and thus $\varphi(w_n)$ does as well.

For $w \in \D$, let $k_w$ be the corresponding normalized reproducing kernel function, namely
$$
k_w(z) = \frac{(1-\abs{w}^2)^{1/2}}{1-\overline{w}z}.
$$
Since the sequence of unit norm functions $k_{w_n}$ converges weakly to $0$, to show that $\norm{W_\varphi}_e \ge 1$ it suffices to show that $\lim_{n \to \infty} \norm{W_\varphi^* k_{w_n}} = 1$.
This is easily achieved, as
$$
W_\varphi^* k_w = C_\varphi^* M_{(\varphi^\prime)^{1/2}}^* k_w =  \left(\frac{\overline{\varphi^\prime(w)}(1-\abs{w}^2)}{1-\abs{\varphi(w)}^2}\right)^{1/2} k_{\varphi(w)}.
$$
Hence $\norm{W_\varphi^* k_w} \le 1$ by the Schwarz-Pick lemma. 
Moreover, by the Julia-Carath\'eodory theorem (see \cite[\S4.2]{Shapiro93}) we have
$$
\norm{W_\varphi^* k_{w_n}}^2 = \frac{\abs{\varphi^\prime(w_n)}(1-\abs{w_n})^2}{1-\abs{\varphi(w_n)}^2} \to 1.
$$
\end{proof}

Theorem \ref{thmSingularValues} shows that $\norm{W_\varphi}_e$ captures basic geometric information about the inclusion $\varphi(U) \subseteq V$ when $\varphi \in \cJ^\infty(U,V)$, namely whether or not the boundaries of $\varphi(U)$ and $V$ intersect.
It is desirable to have a geometric understanding of the meaning of the singular values $s_n(W_\varphi)$, or more generally of $\abs{W_\varphi}$.
In Section \ref{secRestriction}, we will explore further the connection between $\abs{W_\varphi}$ and the inclusion $\varphi(U) \subseteq V$ via the restriction operator $R: H^2(V) \to H^2(\varphi(U))$.

\begin{Remark}
The singular value estimates obtained in Theorem \ref{thmSingularValues} are much more precise than one can generally compute for non-compact weighted composition operators.
For example, the techniques employed in \cite[Thm. 2.2]{LLQR16}, which apply to a broader class of weighted composition operators, cannot easily recover even $\norm{W_\varphi}_e$.
This suggests that the family $W_\varphi$ form a particularly interesting class of examples, deserving of further study.
\end{Remark}

\section{Second quantization}\label{secSecondQuantization}

One motivation for studying the operators $W_\varphi$ comes from conformal field theory.
Let $H$ be a Hilbert space, and let $\Lambda H = \bigoplus_{n=0}^\infty \Lambda^n H$ be the antisymmetric Fock space of $H$.
Given $T \in \cB(H,K)$, the map $\Lambda(T): \Lambda H \to \Lambda K$ given by
$$
T(v_1 \wedge \cdots \wedge v_n) = Tv_1 \wedge \cdots \wedge Tv_n
$$
is only densely defined in general, although $\norm{\Lambda(T)} \le 1$ when $\norm{T} \le 1$.

When $\varphi \in \cJ^\infty(\D,\D)$ and $\norm{\varphi}_\infty < 1$,  $\Lambda(W_\varphi)$ is closely related to the second quantization of the Hardy space of the annulus $\overline{\D} \setminus \varphi(\D)$.
In \cite{Ten16b}, the second author considered the case when $\norm{\varphi}_\infty = 1$, and showed that the boundedness of $\Lambda(W_\varphi)$ was important for building a dictionary between the geometric and operator algebraic formulations of the free fermion conformal field theory.
However, boundedness of $\Lambda(W_\varphi)$ was only established when $\varphi$ could be fit into a nice one parameter semigroup $(\varphi_t)_{t \ge 0} \in \cJ^\infty(\D,\D)$ with interior Denjoy-Wolff point (see also Corollary \ref{corSemigroups}).
The boundedness of $\Lambda(W_\varphi)$ is related to the singular values studied in Section \ref{secSingularValues} by the following result.

\begin{Theorem}\label{thmLambdaBoundedness}
Let $H$ and $K$ be Hilbert spaces, and let $T \in \cB(H, K)$.
Then the following are equivalent:
\begin{enumerate}[label=(\arabic*)]
\item $\Lambda(T)$ is bounded. \label{itmBoundedness}
\item $T$ can be written $T = A + X$, with $A,X \in \cB(H,K)$, $\norm{A} \le 1$, and $X$ trace class. \label{itmDecomposition}
\item $\prod_{n=1}^\infty \max(1,s_n(T)) < \infty$. \label{itmNormProduct}
\end{enumerate}
If these conditions hold, then $\norm{\Lambda(T)}$ is given by the product in condition \ref{itmNormProduct}.
\end{Theorem}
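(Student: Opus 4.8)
The plan is to deduce all three equivalences, together with the final norm formula, from the single identity
$$
\norm{\Lambda(T)} = \prod_{n=1}^\infty \max(1, s_n(T)),
$$
read as an equality in $[1,\infty]$ (so that $\Lambda(T)$ is bounded exactly when the right-hand side is finite). Since $\Lambda(T)$ is grading-preserving, it is the orthogonal direct sum of its blocks $\Lambda^n(T):\Lambda^n H \to \Lambda^n K$, whence $\norm{\Lambda(T)} = \sup_{n \ge 0}\norm{\Lambda^n(T)}$, with the vacuum block $\Lambda^0(T)$ contributing the factor $1$. The heart of the matter is the lemma $\norm{\Lambda^n(T)} = \prod_{j=1}^n s_j(T)$. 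To prove it I would first invoke functoriality, $\Lambda^n(T)^\ast \Lambda^n(T) = \Lambda^n(T^\ast T) = \Lambda^n(\abs{T})^2$, which identifies the positive square roots as $\abs{\Lambda^n(T)} = \Lambda^n(\abs{T})$ and reduces the computation to the top of the spectrum of $\Lambda^n(P)$ for the positive operator $P = \abs{T}$.

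For the positive operator $P$, let $Q$ be the spectral projection of $P$ onto the part of its spectrum exceeding $\norm{T}_e$; since $P$ commutes with $Q$, the decomposition $\Lambda^n(H)=\bigoplus_{a+b=n}\Lambda^a(QH)\otimes\Lambda^b((1-Q)H)$ diagonalizes $\Lambda^n(P)$ into blocks of norm $\norm{\Lambda^a(QP)}\,\norm{\Lambda^b((1-Q)P)}$. On $QH$ the eigenvalues exceeding $\norm{T}_e$ are exactly the approximation numbers $s_j(T)$ that exceed $\norm{T}_e$, so $\norm{\Lambda^a(QP)}$ is their top $a$-fold product by the classical (finite-dimensional) Weyl computation, while $\norm{\Lambda^b((1-Q)P)} = \norm{T}_e^{\,b}$, the lower bound being saturated by an orthonormal Weyl sequence for the essential spectrum. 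Optimizing over $a+b=n$ prefers eigenvalues above $\norm{T}_e$ over factors of $\norm{T}_e$, and recovers precisely the ordered product $\prod_{j=1}^n s_j(T)$. Finally, because $s_n(T)$ is non-increasing, the partial products $\prod_{j=1}^n s_j(T)$ increase while $s_j(T) \ge 1$ and decrease thereafter, so their supremum is $\prod_n \max(1,s_n(T))$; this proves the identity, giving \ref{itmBoundedness} $\Leftrightarrow$ \ref{itmNormProduct} and the stated value of $\norm{\Lambda(T)}$.

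It remains to connect \ref{itmDecomposition}. For \ref{itmNormProduct} $\Rightarrow$ \ref{itmDecomposition}, note that convergence of $\prod_n \max(1,s_n(T))$ is equivalent to $\sum_n (s_n(T)-1)_+ < \infty$, which in particular forces $\norm{T}_e \le 1$. Then $B := (\abs{T}-I)_+$, defined by spectral calculus, is supported on the spectrum of $\abs{T}$ above $1$, hence is compact with $\operatorname{Tr}(B) = \sum_n (s_n(T)-1)_+ < \infty$, so $B$ is trace class; writing the polar decomposition $T = U\abs{T}$ and $\abs{T} = \min(\abs{T},I) + B$, the choice $A = U\min(\abs{T},I)$, $X = UB$ gives $\norm{A}\le 1$ and $X$ trace class. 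Conversely, for \ref{itmDecomposition} $\Rightarrow$ \ref{itmNormProduct} I would use subadditivity of approximation numbers, $s_{m+n-1}(A+X) \le s_m(A) + s_n(X)$; taking $m=1$ yields $s_n(T) \le \norm{A} + s_n(X) \le 1 + s_n(X)$, so $(s_n(T)-1)_+ \le s_n(X)$ and $\sum_n (s_n(T)-1)_+ \le \sum_n s_n(X) = \norm{X}_1 < \infty$, whence the product converges. The main obstacle is the lemma $\norm{\Lambda^n(T)} = \prod_{j=1}^n s_j(T)$ in the genuinely non-compact setting: one must verify that the essential-spectrum block contributes the factor $\norm{T}_e^{\,b}$ exactly (via Weyl sequences) and that the optimization over $a+b=n$ faithfully reproduces the ordered product of approximation numbers rather than some larger combination.
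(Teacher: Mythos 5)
Your argument is correct and follows essentially the same route as the paper: the key lemma $\norm{\Lambda^n(T)} = \prod_{j=1}^n s_j(T)$ proved by splitting $\abs{T}$ along the spectral projection above $\norm{T}_e$ and using Weyl-type orthonormal families for the essential part, the identity $\norm{\Lambda(T)} = \sup_N \prod_{n=1}^N \max(1,s_n(T))$ for the equivalence of \ref{itmBoundedness} and \ref{itmNormProduct}, the splitting $\abs{T} = \min(\abs{T},I) + (\abs{T}-I)_+$ for \ref{itmNormProduct} $\Rightarrow$ \ref{itmDecomposition}, and $s_n(A+X) \le 1 + s_n(X)$ for the converse. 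The one claim worth tidying is the intermediate equality $\norm{\Lambda^b((1-Q)\abs{T})} = \norm{T}_e^{\,b}$, which can fail when $Q$ has infinite rank (then $\norm{(1-Q)\abs{T}}$ may be strictly less than $\norm{T}_e$); this does not affect your conclusion, since in that case the block with $a = n$ already realizes the product $\prod_{j=1}^n s_j(T)$ and dominates the others.
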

The conditions of Theorem \ref{thmLambdaBoundedness} are clearly satisfied when $\norm{T}_e < 1$, and can never be satisfied when $\norm{T}_e > 1$.
When $\norm{T}_e = 1$, the last condition reduces to $\prod_{n=1}^\infty s_n(T) < \infty$, or equivalently $\sum (s_n(T) - 1) < \infty$.
It is possible that Theorem \ref{thmLambdaBoundedness} is well-known, but we were unable to find a reference, and so we give a proof at the end of this section.
We note that the equivalence of conditions \ref{itmBoundedness} and \ref{itmDecomposition} is implicit in \cite[\S5]{Ten16b}.

\begin{Corollary}\label{corSemigroups}
Let $U \in \cJ^\infty$, and let $(\varphi_t)_{t \ge 0}$ be a continuous semigroup of maps with $\varphi_t \in \cJ^\infty(U,U)$.
Suppose that there exists a $z_0 \in U$ such that $\varphi_t(z_0) = z_0$ for all $t \ge 0$.
Moreover, suppose that the Koenigs function $\sigma$ of $\varphi_t$ maps $U$ onto a Jordan domain with $C^\infty$ boundary.
Then $\prod_{n=1}^\infty \max(1,s_n(W_{\varphi_t})) < \infty$ for all $t \ge 0$.
\end{Corollary}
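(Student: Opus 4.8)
The plan is to derive the corollary from Theorem~\ref{thmLambdaBoundedness} combined with the second-quantization results of \cite{Ten16b}, rather than from the function-theoretic estimates of Theorem~\ref{thmSingularValues}. By the equivalence of conditions \ref{itmBoundedness} and \ref{itmNormProduct} in Theorem~\ref{thmLambdaBoundedness}, the finiteness of $\prod_{n=1}^\infty \max(1, s_n(W_{\varphi_t}))$ is equivalent to the boundedness of the exterior power $\Lambda(W_{\varphi_t})$ on the antisymmetric Fock space. Thus the entire statement reduces, for each fixed $t \ge 0$, to proving that $\Lambda(W_{\varphi_t})$ is bounded.

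Before invoking \cite{Ten16b} I would normalize the geometry and linearize the semigroup. Composing with a Riemann map $\rho:\D \to U$ and, if necessary, an automorphism of $\D$ conjugates $W_{\varphi_t}$ by a unitary and lets us assume $U = \D$ with $\varphi_t(0) = 0$; since conjugation by a unitary changes neither the singular values nor the norm of $\Lambda(W_{\varphi_t})$, this is harmless. Let $\sigma$ be the Koenigs function, so that $\sigma \circ \varphi_t = e^{t\lambda}\sigma$ for a constant $\lambda$ with $\Re\lambda \le 0$ (the multiplier $\varphi_t^\prime(0) = e^{t\lambda}$, with $\abs{e^{t\lambda}}\le 1$ by the Schwarz lemma), and let $\Omega = \sigma(\D)$, which by hypothesis is a smooth Jordan domain. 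Because $\sigma:\D \to \Omega$ is a biholomorphism onto a smooth Jordan domain, the weighted composition operator $W_\sigma:H^2(\Omega)\to H^2(\D)$ is \emph{unitary}, and the identity $\varphi_t = \sigma^{-1}\circ m_t \circ \sigma$, where $m_t(w) = e^{t\lambda}w$, yields $W_{\varphi_t} = W_\sigma\, W_{m_t}\, W_\sigma^{-1}$ via the composition rule $W_{\varphi_1}W_{\varphi_2} = W_{\varphi_2\circ\varphi_1}$. Consequently $\Lambda(W_{\varphi_t}) = \Lambda(W_\sigma)\,\Lambda(W_{m_t})\,\Lambda(W_\sigma)^{-1}$ on the natural dense domain of finite wedges, and since $\Lambda(W_\sigma)$ is unitary, $\Lambda(W_{\varphi_t})$ is bounded if and only if $\Lambda(W_{m_t})$ is. This reduces the whole problem to the single scaling map $m_t$ of the fixed smooth domain $\Omega$.

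The remaining step, the boundedness of $\Lambda(W_{m_t})$, is exactly the input supplied by \cite{Ten16b}, and it is the main obstacle. I would cite the relevant statement there, checking that our hypotheses (continuous semigroup, interior Denjoy--Wolff point, smooth Koenigs image) match the ones under which boundedness of the second-quantized semigroup is established. The difficulty is genuine: in the boundary-touching regime $\partial m_t(\Omega)\cap\partial\Omega\ne\emptyset$ one has $\norm{W_{m_t}}_e = 1$, and Theorem~\ref{thmSingularValues} only gives $s_n(W_{m_t}) \le 1 + K/n$, which is far too weak to yield the summability $\sum_n(s_n(W_{m_t}) - 1) < \infty$ demanded by condition~\ref{itmNormProduct}. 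The only route to this summability that I know of is indirect: one realizes $\Lambda(W_{m_t})$ as a semigroup generated by a smeared Virasoro field and applies the quantum energy inequality of \cite{FewsterHollands05} to obtain a one-sided bound on the spectrum of the real part of the generator, from which boundedness follows. As in the introduction, I would flag that a direct function-theoretic proof of this summability, in the spirit of Theorem~\ref{thmSingularValues}, remains open.
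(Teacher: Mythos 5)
Your proposal is correct and follows essentially the same route as the paper: reduce via the equivalence of conditions \ref{itmBoundedness} and \ref{itmNormProduct} in Theorem \ref{thmLambdaBoundedness} to the boundedness of $\Lambda(W_{\varphi_t})$, normalize by unitaries so that $U = \D$ and the fixed point is $0$, and cite \cite[Thm. 3.21]{Ten16b} for that boundedness. The explicit Koenigs linearization $W_{\varphi_t} = W_\sigma W_{m_t} W_\sigma^{-1}$ that you carry out is detail the paper delegates to the cited proof; it does not change the argument.
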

\begin{proof}
The semigroup $W_{\varphi_t}$ is unitarily equivalent to a semigroup of self-maps of $\D$ which fix $z_0 = 0$.
It was shown in the proof of \cite[Thm. 3.21]{Ten16b} that $\Lambda(W_{\varphi_t})$ is bounded in this case, and the corollary then follows immediately from Theorem \ref{thmLambdaBoundedness}.
\end{proof}

We expect that the bounds from Theorem \ref{thmSingularValues} can be improved to $\prod s_n(W_\varphi) < \infty$ for arbitrary $\varphi \in \cJ^\infty(U,V)$.
It would follow that there is a contravariant functor
$
\cJ^\infty \to \Hilb
$
sending $U$ to $\Lambda H^2(U)$ and $\varphi$ to $\Lambda(W_\varphi)$.
The endomorphisms of $\D$ in $\cJ^\infty$ are related to an extension of Segal's semigroup of annuli to `degenerate annuli,' where the incoming boundary and outgoing boundary of the annuli are allowed to overlap, and the functor $\varphi \mapsto \Lambda(W_\varphi)$ should be thought of as a representation of $\End(\D)$.

We now turn to proving Theorem \ref{thmLambdaBoundedness}.
The key step in the proof is the following useful observation.

\begin{Lemma}\label{lemNormExteriorPower}
Let $H$ and $K$ be Hilbert spaces, and let $T \in \cB(H, K)$.
For $n\ge 0$, let $\Lambda^n(T) \in \cB(\Lambda^n H, \Lambda^n K)$ be the map $\Lambda^n(T)(v_1 \wedge \cdots \wedge v_n) = Tv_1 \wedge \cdots \wedge T v_n$.
Then $\norm{\Lambda^n(T)} = \prod_{j=1}^n s_j(T)$.
\end{Lemma}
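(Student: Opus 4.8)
The plan is to reduce the statement to the spectral theorem for a single positive operator, the only delicate point being the passage from operators with pure point spectrum to the general (possibly non-compact) case.

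First I would record two functorial identities for $\Lambda^n$. Using the determinant formula $\langle v_1 \wedge \cdots \wedge v_n, w_1 \wedge \cdots \wedge w_n\rangle = \det(\langle v_i, w_j\rangle)$ for the inner product on $\Lambda^n H$, one checks directly on decomposables that $\Lambda^n(T)^\ast = \Lambda^n(T^\ast)$, and the definition immediately gives $\Lambda^n(AB) = \Lambda^n(A)\Lambda^n(B)$. All operators here are bounded, since $\Lambda^n(S)$ is the restriction of $S^{\otimes n}$ to the invariant subspace $\Lambda^n H \subseteq H^{\otimes n}$ (note $S^{\otimes n}$ commutes with the permutation action), whence $\norm{\Lambda^n(S)} \le \norm{S}^n$. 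Combining these identities with $\abs{T} = (T^\ast T)^{1/2}$ yields $\Lambda^n(T)^\ast \Lambda^n(T) = \Lambda^n(T^\ast T) = \Lambda^n(\abs{T})^2$, and since $\Lambda^n(\abs{T})$ is positive we conclude $\norm{\Lambda^n(T)} = \norm{\Lambda^n(\abs{T})}$. As $s_j(T) = s_j(\abs{T})$, it suffices to prove the lemma for the positive operator $P := \abs{T}$.

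The heart of the matter is the case in which $P$ admits an orthonormal basis of eigenvectors $\{f_i\}$ with eigenvalues $\beta_1 \ge \beta_2 \ge \cdots$ (for instance $P$ compact). Here the spectral theorem trivializes the problem: the vectors $f_{i_1} \wedge \cdots \wedge f_{i_n}$ with $i_1 < \cdots < i_n$ form an orthonormal eigenbasis of $\Lambda^n(P)$ with eigenvalues $\beta_{i_1}\cdots\beta_{i_n}$, the largest being $\beta_1 \cdots \beta_n = \prod_{j=1}^n s_j(P)$. To reduce an arbitrary positive $P$ to this diagonalizable case I would use the monotonicity of $\Lambda^n$ on positive operators: if $0 \le P \le P'$ then $P^{\otimes n} \le (P')^{\otimes n}$ by monotonicity of tensor products of positive operators, and restricting to $\Lambda^n H$ gives $0 \le \Lambda^n(P) \le \Lambda^n(P')$, hence $\norm{\Lambda^n(P)} \le \norm{\Lambda^n(P')}$. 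Writing $m = \norm{P}_e$ and letting $F$ be the spectral projection of $P$ onto the eigenvalues exceeding $m$ (these are $\lambda_1 \ge \lambda_2 \ge \cdots > m$, of finite multiplicity, accumulating only at $m$), the operator $P' := PF + m(1-F)$ is diagonalizable, satisfies $P \le P'$, and has $s_j(P') = \max(\lambda_j,m) = s_j(P)$ for every $j$. The diagonalizable case then gives the upper bound $\norm{\Lambda^n(P)} \le \norm{\Lambda^n(P')} = \prod_{j=1}^n s_j(P)$. For the matching lower bound I would exhibit, for each $\epsilon>0$, an orthonormal frame $v_1,\dots,v_n$ realizing the top $n$ singular values: genuine eigenvectors for those $s_j(P)=\lambda_j>m$, together with approximate eigenvectors for $m$ (orthonormal and orthogonal to the eigenvectors), which exist because $m$ lies in the essential spectrum of $P$ whenever fewer than $n$ eigenvalues exceed it. Then $\langle \Lambda^n(P)(v_1 \wedge \cdots \wedge v_n), v_1 \wedge \cdots \wedge v_n\rangle = \det(\langle P v_i, v_j\rangle)$ lies within $\epsilon$ of $\prod_{j=1}^n s_j(P)$ on the unit vector $v_1 \wedge \cdots \wedge v_n$, and letting $\epsilon \to 0$ completes the argument.

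I expect the main obstacle to be precisely this last step: when $\norm{P}_e>0$ the top of the spectrum is essential rather than attained, so $P$ cannot be diagonalized outright. The operator monotonicity of $\Lambda^n$ combined with the spectral splitting at the essential norm $m$ is exactly what circumvents this difficulty; everything else is the spectral theorem together with routine bookkeeping of singular values.
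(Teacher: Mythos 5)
Your argument is correct, but the key mechanism is different from the paper's. Both proofs reduce to the positive operator $\abs{T}$ via $\Lambda^n(T)^*\Lambda^n(T) = \Lambda^n(T^*T)$, and both obtain the lower bound from orthonormal families of exact eigenvectors (for singular values above $\norm{T}_e$) together with approximate eigenvectors drawn from the infinite-rank spectral projection of a neighbourhood of $\norm{T}_e$. The difference lies in the upper bound. The paper splits into cases according to whether $s_n(T)$ exceeds $\norm{T}_e$, and in the mixed case writes an explicit unitary decomposition of $\Lambda^n H$ adapted to the eigenvectors above the essential norm, exhibiting $\Lambda^n(T)$ as a direct sum of weighted copies of $\Lambda^{k_0}(T|_K)$ whose norms can be read off directly. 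You instead use the operator monotonicity $0 \le P \le P' \Rightarrow \Lambda^n(P) \le \Lambda^n(P')$ --- which does hold, since $ (P')^{\otimes n} - P^{\otimes n} = \sum_{k} P^{\otimes k} \otimes (P'-P) \otimes (P')^{\otimes (n-1-k)}$ is a sum of tensor products of positive operators, and compression to $\Lambda^n H$ preserves the order --- and then compare $P = \abs{T}$ with the diagonalizable majorant $PF + \norm{P}_e(1-F)$, which has the same singular values and whose $n$-th exterior power is diagonal in the wedge basis. Your route avoids the three-way case analysis and isolates a reusable monotonicity lemma, at the cost of having to verify the positivity of tensor products of positive operators; the paper's decomposition gives somewhat more information, namely the unitary structure of $\Lambda^n(T)$ and not merely its norm. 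Both arguments are complete.
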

\begin{proof}
We have $\abs{\Lambda^n(T)} = \Lambda^n (\abs{T})$ and $s_j(T) = s_j(\abs{T})$, so we may assume without loss of generality that $H = K$ and $T \ge 0$. 
If $n > \dim H$ then the conclusion is clear, so we assume that $\dim H \ge n$.
As $T$ will remain fixed, we will write $s_j$ instead of $s_j(T)$.

We first consider the case when $\norm{T} = \norm{T}_e$.
Since $\Lambda^n(T)$ is the restriction of $T^{\otimes n}$ to an invariant subspace, we have 
$$
\norm{\Lambda^n(T)} \le \norm{T}^n = \prod_{j=1}^n s_j.
$$
On the other hand, since the spectral projection associated to any neighborhood of the essential norm has infinite rank, for any $\epsilon > 0$ there exists an orthonormal family of vectors $v^\epsilon_1, \ldots, v^\epsilon_n$ such that the collection $Tv^\epsilon_j$ is also orthogonal, and $\|Tv^\epsilon_j\| > \norm{T} - \epsilon$.
Hence 
$$
\norm{\Lambda^n(T)} \ge \sup_\epsilon \norm{Tv^\epsilon_1 \wedge \cdots \wedge Tv^\epsilon_n} = \norm{T}^n,
$$
which completes the proof when $\norm{T} = \norm{T}_e$.

Next suppose $s_n > \norm{T}_e$.
Then each $s_n$ is an eigenvalue of $T$, and we may choose an orthornormal family of eigenvectors $v_1, \ldots, v_n$ so that $Tv_j = s_j v_j$.
Let $K = \Span \{v_1, \ldots, v_n\}^\perp$.
There is a natural unitary isomorphism 
$$
\Lambda^n H \cong \bigoplus_{k_0, \ldots, k_n} \Lambda^{k_0} K \otimes \Lambda^{k_1} \C v_1 \otimes \cdots \otimes \Lambda^{k_n} \C v_n
$$
where the direct sum is indexed by nonnegative integers $k_j$ which satisfy $k_0 + \cdots + k_n = n$ and $k_j \le 1$ for $j \ge 1$.
This unitary identifies $\Lambda^n(T)$ with
$$
\bigoplus_{k_0, \ldots, k_n} (s_1^{k_1} \cdots s_n^{k_n}) \Lambda^{k_0} (T|_K) \otimes 1 \otimes \cdots \otimes 1.
$$
We have $s_1 \ge \cdots \ge s_n \ge \norm{T|_K}$, and thus 
$$
\norm{\Lambda^n(T)} = \max_{k_1,\ldots,k_n} (s_1^{k_1} \cdots s_n^{k_n}) \norm{\Lambda^{k_0} T|_K} = s_1 \cdots s_n.
$$

Finally, consider when $\norm{T} \ne \norm{T}_e$ but $s_n = \norm{T}_e$.
Let $m$ be such that $s_m > \norm{T}_e$ but $s_{m+1} = \norm{T}_e$.
Then proceeding as above, choose an orthonormal set of eigenvectors $v_1, \ldots, v_m$ satisfying $T v_j = s_j v_j$, and let $K = \Span \{v_1, \ldots, v_m\}^\perp$.
Then $\Lambda^n(T)$ is unitarily equivalent to
$$
\bigoplus_{k_0, \ldots, k_m} (s_1^{k_1} \cdots s_m^{k_m}) \Lambda^{k_0} (T|_K) \otimes 1 \otimes \cdots \otimes 1
$$
where the direct sum is indexed by nonnegative integers $k_j$ which satisfy $k_0 + \cdots + k_m = n$ and $k_j \le 1$ when $j \ge 1$.
Note that $\norm{T|_K} = \norm{T|_K}_e = \norm{T}_e$, so that $\norm{\Lambda^{k_0}(T|_K)} = \norm{T}_e^{k_0}$ by the first case we considered.
Hence
$$
\norm{\Lambda^n(T)} = \max_{k_1,\ldots,k_m} (s_1^{k_1} \cdots s_m^{k_m}) \norm{T}_e^{k_0} = s_1 \cdots s_n.
$$
\end{proof}

We can now give a short proof of Theorem \ref{thmLambdaBoundedness}.
\begin{proof}[Proof of Theorem \ref{thmLambdaBoundedness}]
It follows immediately from Lemma \ref{lemNormExteriorPower} that 
$$
\|\bigoplus_{n=0}^N \Lambda^n(T)\| = \prod_{n=1}^N \max(1,s_n(T)).
$$
This shows the equivalence of \ref{itmBoundedness} and \ref{itmNormProduct}, and gives the formula for $\norm{\Lambda(T)}$ when these conditions hold.
Oberve that \ref{itmDecomposition} and \ref{itmNormProduct} clearly both hold when $\norm{T}_e < 1$, and clearly both fail when $\norm{T}_e > 1$, so we may asume that $\norm{T}_e = 1$.
Since all of the conditions are unchanged by replacing $T$ by $\abs{T}$, we assume that $T \ge 0$.

If \ref{itmDecomposition} holds, we have $s_n(T) \le 1 + s_n(X)$, and thus $\prod s_n(T) = \prod \max(1,s_n(T)) < \infty$, as desired.
Now assume that \ref{itmNormProduct} holds.
Let $P_{\le 1}$ be the spectral projection of $T$ corresponding to the interval $[0,1]$, and let $P_{>1} = 1 - P_{\le 1}$.
Then $T = (TP_{\le 1} + P_{>1}) + (T-1)P_{> 1}$.
Clearly $TP_{\le 1} + P_{> 1}$ is a contraction, and
$$
s_n\big((T-1)P_{>1}\big) = s_n(T) - 1
$$
is summable by assumption, so $(T-1)P_{> 1}$ is trace class.
\end{proof}

\section{Restriction operators and the Fisher-Micchelli phenomenon}\label{secRestriction}

Let $U,V \in \cJ^\infty$, and suppose $U \subseteq V$.
Define the restriction operator $R:H^2(V) \to H^2(U)$ by $Rf = f|_U$.
In this section we will explore some consequences of the following simple observation: $R = W_\iota$, where $\iota:U \hookrightarrow V$ the inclusion map, and we have chosen the positive square root of $\iota^\prime$.
When combined with the fact that $W_\varphi$ is unitary when $\varphi$ is biholomorphic, this observation is surprisingly powerful.
To begin, we note that restriction operators characterize the modulus $\abs{W_\varphi}$ for arbitrary $U,V \in \cJ^\infty$ and $\varphi \in \cJ^\infty(U,V)$.
\begin{Proposition}\label{propRModulus}
Let $U,V \in \cJ^\infty$, let $\varphi \in \cJ^\infty(U,V)$, and let $R:H^2(V) \to H^2(\varphi(U))$ be the restriction operator.
Then $\abs{W_\varphi} = \abs{R}$.
\end{Proposition}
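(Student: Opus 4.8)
The plan is to factor $W_\varphi$ through the restriction operator $R$ and a unitary, so that the modulus is preserved. Write $\tilde\varphi:U \to \varphi(U)$ for the corestriction of $\varphi$, which is a biholomorphism onto the domain $\varphi(U) \in \cJ^\infty$, and equip it with the \emph{same} choice of $(\varphi^\prime)^{1/2}$ used to define $W_\varphi$. Then $\varphi = \iota \circ \tilde\varphi$, where $\iota:\varphi(U) \hookrightarrow V$ is the inclusion map.

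First I would verify, directly from the defining formula $(W_\varphi f)(z) = \varphi^\prime(z)^{1/2} f(\varphi(z))$, that $W_\varphi = W_{\tilde\varphi}\, R$. Concretely, $R:H^2(V) \to H^2(\varphi(U))$ restricts a function to $\varphi(U)$, while $W_{\tilde\varphi}:H^2(\varphi(U)) \to H^2(U)$ acts by $(W_{\tilde\varphi} g)(z) = \varphi^\prime(z)^{1/2} g(\varphi(z))$, and composing the two reproduces $W_\varphi$ on the nose. Equivalently, this is the composition identity $W_{\tilde\varphi} W_\iota = W_{\iota \circ \tilde\varphi}$ recorded earlier, using $R = W_\iota$ for the positive square root of $\iota^\prime \equiv 1$, so the square roots match up and the factorization is an honest operator identity.

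Next, since $\tilde\varphi$ is biholomorphic, $W_{\tilde\varphi}$ is unitary, and in particular $W_{\tilde\varphi}^\ast W_{\tilde\varphi} = 1$. Therefore
$$
W_\varphi^\ast W_\varphi = R^\ast W_{\tilde\varphi}^\ast W_{\tilde\varphi} R = R^\ast R,
$$
and taking positive square roots gives $\abs{W_\varphi} = \abs{R}$.

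There is no genuine obstacle here; the only points requiring care are bookkeeping. One must ensure the chosen holomorphic square root of $\varphi^\prime$ is used consistently in defining $W_{\tilde\varphi}$, so that the factorization holds exactly rather than up to a unimodular constant, and one must confirm that $W_{\tilde\varphi}$ is indeed unitary, which rests on $\tilde\varphi$ being a biholomorphism between domains in $\cJ^\infty$ as already established. The essential content is simply the factorization through a unitary, which leaves the modulus unchanged.
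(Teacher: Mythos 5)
Your proof is correct and is essentially the paper's argument: the paper factors $W_\iota = W_\sigma W_\varphi$ with $\sigma = \tilde\varphi^{-1}$ the biholomorphic right inverse, whereas you write the equivalent factorization $W_\varphi = W_{\tilde\varphi} R$; in both cases the modulus is preserved because the biholomorphism induces a unitary. Your attention to the compatibility of the square roots (using $(\iota^\prime)^{1/2} \equiv 1$) is a fine point the paper leaves implicit.
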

\begin{proof}
If we write $\sigma:\varphi(U) \to U$ for the biholomorphic right inverse of $\varphi$ (i.e. $\varphi \circ \sigma = \iota$), then $W_\sigma$ is unitary and
$$
\abs{R} = \abs{W_\iota} = \abs{W_\sigma W_\varphi} = \abs{W_\varphi}.
$$
\end{proof}
By Proposition \ref{propRModulus}, we can conclude that $\abs{W_\varphi}$ is determined by the inclusion $\varphi(U) \subset V$, and not the map $\varphi$ itself.
An even more basic consequence of the fact that $R = W_\iota$ is that Theorem \ref{thmSingularValues} applies to $R$.
\begin{Corollary}\label{corRestrictionSingularValues}
Let $U,V \in \cJ^\infty$ with $U \subseteq V$, and let $R: H^2(V) \to H^2(U)$ be the restriction operator.
Assume that $\partial U \cap \partial V \ne \emptyset$.
Then $\norm{R}_e = 1$, and $s_n(R) \le 1 + \frac{K}{n}$ for some $K > 0$.
We can write $R = A + X$, with $\norm{A} \le 1$ and $X$ lying in every Schatten $p$-class with $p > 1$.
\end{Corollary}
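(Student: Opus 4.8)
The plan is to exploit the identity $R = W_\iota$, where $\iota:U \hookrightarrow V$ is the inclusion equipped with the positive square root of $\iota^\prime \equiv 1$, so that Theorem \ref{thmSingularValues} applies verbatim. Since $\iota(U) = U$ and $\partial U \cap \partial V \ne \emptyset$ by hypothesis, the essential-norm dichotomy of Theorem \ref{thmSingularValues} yields $\norm{R}_e = 1$, and the same theorem gives $s_n(R) \le 1 + \frac{K}{n}$ for some $K > 0$. The only genuine work lies in producing the decomposition $R = A + X$, and for that I would adapt the argument proving \ref{itmNormProduct} $\Rightarrow$ \ref{itmDecomposition} in Theorem \ref{thmLambdaBoundedness}, modified to account for the fact that $R$ maps between the two distinct Hilbert spaces $H^2(V)$ and $H^2(U)$ and is therefore not positive.

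To handle this, I would first pass to the polar decomposition $R = U\abs{R}$, where $U$ is a partial isometry and $\abs{R} = \sqrt{R^\ast R} \ge 0$ acts on $H^2(V)$. Letting $P_{\le 1}$ denote the spectral projection of $\abs{R}$ for the interval $[0,1]$ and setting $P_{>1} = 1 - P_{\le 1}$, I would write
$$
R = U\big(\abs{R}P_{\le 1} + P_{>1}\big) + U\big(\abs{R}-1\big)P_{>1} =: A + X.
$$
The factor $\abs{R}P_{\le 1} + P_{>1}$ is positive with spectrum contained in $[0,1]$, hence a contraction, so $\norm{A} \le \norm{U} \le 1$, as required.

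Finally, I would control the Schatten norms of $X = U\big(\abs{R}-1\big)P_{>1}$. Because $U$ is isometric on the range of $\abs{R}$, the singular values of $X$ agree with the nonzero eigenvalues of $\big(\abs{R}-1\big)P_{>1}$, which in decreasing order are exactly $\max(0, s_n(R) - 1)$; invoking $s_n(R) \le 1 + \frac{K}{n}$ bounds these by $\frac{K}{n}$, so that
$$
\sum_{n=1}^\infty s_n(X)^p \le \sum_{n=1}^\infty \left(\frac{K}{n}\right)^p < \infty
$$
for every $p > 1$, placing $X$ in every Schatten class $S_p$ with $p > 1$. (It need not be trace class, since $\sum K/n$ diverges, which is precisely why $p = 1$ is excluded.) No single step is a serious obstacle; the one point demanding care is the passage through the polar decomposition, ensuring that the contraction estimate for $A$ and the singular-value identity for $X$ survive the non-self-adjointness of $R$ and its domain and codomain being different spaces.
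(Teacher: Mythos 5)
Your proposal is correct and follows essentially the same route as the paper: the first two claims are read off from Theorem \ref{thmSingularValues} via $R = W_\iota$, and the decomposition $R = A + X$ is obtained by the spectral-projection splitting from the proof of Theorem \ref{thmLambdaBoundedness}, with the Schatten-$p$ membership for $p>1$ coming from $s_n(X) \le K/n$. The paper merely cites that proof without spelling out the reduction to the positive case, so your explicit passage through the polar decomposition $R = U\abs{R}$ is a welcome (and correct) elaboration rather than a departure.
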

\begin{proof}
The conclusions about $\norm{R}_e$ and $s_n(R)$ follow immediately from Theorem \ref{thmSingularValues} and the observation that $R = W_\iota$.
The decomposition $R = A + X$ is a straightforward consequence of the bound on $s_n(R)$ (see the proof of Theorem \ref{thmLambdaBoundedness}).
\end{proof}

\begin{Question}
Is there a  more direct proof of Corollary \ref{corRestrictionSingularValues} which does not use weighted composition operators?
\end{Question}

The restriction operator $R$ between Hilbert spaces with a reproducing kernel plays an important role in approximation theory and has been extensively studied, primarily in the case when $\overline{U} \subset V$.

An important feature of the Hardy spaces in question is the boundedness of the point evaluations inside the domain, or equivalently, the existence of the reproducing kernel.
Specifically, if $\{h_j\}_{j \in \N}$ is an orthonormal basis for $H^2(U)$, we recall that
$$
K_U(z,w) = \sum_{j \in \N} h_j(z) \overline{h_j(w)},
$$
is a positive definite Hermitian kernel, with the reproducing property
$$ 
\int_{\partial U} K_U(z,w) f(w) \, ds(w) = f(z), \ \ z \in U, \ \ f \in H^2(U).
$$
Due to the smoothness assumption on the boundary of $U$, the reproducing kernel is of class $C^\infty$ on the product space $\overline{U} \times \overline{U}$ minus the diagonal, with a singularity of Cauchy type along the diagonal.
This can be seen via conformal mapping, importing the behavior of the kernel associated to the unit disk $\D$. 
More precisely
$$
K_{\D}(z,w) = \frac{1}{1-z\overline{w}}.
$$

The starting point for spectral analysis of the restriction operator lies in its representation as a singular integral operator:
$$
(R^\ast R f)(z) = \int_{\partial U} K_V(z,w)f(w) \, ds(w), \ \ f \in H^2(V).
$$
%Indeed, 
%$$ 
%\langle R^\ast R f, g \rangle_{H^2(V)} = \langle Rf, Rg \rangle_{H^2(U)} = \int_{\partial U} f(z) \overline{g(z)} ds(z),
%$$
%and
%$$ 
%\int_{\partial V} K_V(z,w) \overline{g(z)} \, ds(z) = \overline{g(w)}, \ \ z \in U, \ \ g \in H^2(V).
%$$ 
The bounded positive operator $R^\ast R = \abs{R}^2$ has spectrum contained in the interval $[0, \|R\|^2]$. 
Theorem \ref{thmSingularValues} asserts that,  when $\partial U \cap \partial V$ is not empty, the essential spectrum lies in the interval $[0,1]$ and hence only countably many isolated eigenvalues, possibly converging to $1$, lie in the interval $(1,\|R\|^2]$. 
Moreover, these eigenvalues, arranged into decreasing order
$$ 
\lambda_0 \geq \lambda_1 \geq \lambda_2 \geq \ldots > 1
$$
decay as stated in the theorem:
$$ 
\sup_n n(\lambda_n -1) < \infty.
$$

To every isolated eigenvalue $\lambda_n$ we may associate an eigenfunction $f_n \in H^2(V)$ with $\| f_n \|_{H^2(V)} = 1$.
The integral equation
$$
\lambda_n f_n(z) = \int_{\partial U} K_V(z,w)f_n(w) \, ds(w), \ \ z \in V,
$$
gives some precious information about qualitative properties of $f_n$. 
For instance, the function $f_n(z)$ analytically extends across $\partial V$ as far as the reproducing kernel extends. 
Moreover, the operator $R^\ast R$ is self-adjoint, hence the double orthogonality of these
eigenfunctions:
$$ 
\langle f_n, f_k \rangle_{H^2(V)} = \delta_{kn},
$$
and
$$
\langle f_n, f_k \rangle_{H^2(U)} = \lambda_n \delta_{kn}.
$$

By Proposition \ref{propRModulus}, for every weighted composition operator $W_\varphi$ we can find a restriction operator $R$ such that $W_\varphi^*W_\varphi = R^*R$, and thus we may use restriction operators to study the operators $W_\varphi$.
The following theorem uses this relationship between $\abs{W_\varphi}$ and $\abs{R}$ to give an analog of the Fisher-Micchelli phenomenon \cite{FisherMicchelli80,Fisher83} for non-compact restriction operators, and thus for $\abs{W_\varphi}$.
\begin{Theorem}\label{thmFisherMicchelli}
Let $U,V \in \cJ^\infty$ and $\varphi \in \cJ^\infty(U,V)$, and assume that $\varphi(U) \ne V$.
Then we have
\begin{enumerate}
\item All eigenvalues of $\abs{W_\varphi}$ have multiplicity one.
\item If $\lambda_0 > \lambda_1 > \cdots > \norm{W_\varphi}_e$ is the complete list of eigenvalues of $\abs{W_\varphi}$ which exceed $\norm{W_\varphi}_e$, then non-zero eigenfunctions with eigenvalue $\lambda_n$ have at most $n$ zeros in $V$.
\end{enumerate}
In particular, if $U \subsetneq V$ then the above results hold for the restriction operator $W_\iota = R:H^2(V) \to H^2(U)$.
\end{Theorem}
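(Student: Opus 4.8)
The plan is to reduce everything to the restriction operator and then import, via an approximation argument, the classical Fisher--Micchelli analysis of the compact case. By Proposition \ref{propRModulus} we have $\abs{W_\varphi} = \abs{R}$ for the restriction operator $R:H^2(V)\to H^2(\varphi(U))$, so the eigenvalues and eigenfunctions of $\abs{W_\varphi}$ coincide with those of $\abs{R}$; replacing $\varphi(U)$ by $U$, it suffices to treat $R:H^2(V)\to H^2(U)$ with $U\subsetneq V$. Composing with a Riemann map (a unitary $W_\tau$ whose weight $(\tau^\prime)^{1/2}$ is zero-free, so that zero counts are preserved under the bijection $\tau$) we may further assume $V=\D$. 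An eigenvalue $\lambda$ of $\abs{R}$ corresponds to the eigenvalue $\mu=\lambda^2$ of $G:=R^*R$, which acts by $(Gf)(z)=\int_{\partial U}K_\D(z,w)f(w)\,ds(w)$, and part (2) concerns the $\mu>1=\norm{W_\varphi}_e^2$.

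The first key point is analytic continuation. Since $K_\D(z,w)=(1-z\bar w)^{-1}$ is singular only where $z\bar w=1$, the right-hand side of the eigenequation $\mu f(z)=\int_{\partial U}K_\D(z,w)f(w)\,ds(w)$ is holomorphic on $\{z:z\bar w\ne 1 \text{ for all } w\in\partial U\}$, a region containing $\overline\D$ except at the finitely many points where $\partial U$ meets $\partial\D$. Hence every eigenfunction with $\mu\ne 0$ continues holomorphically across $\partial\D$ off the touching set, and in particular has only finitely many zeros in $\D$, all confined to a fixed compact subset. For part (2) I would then approximate from inside: choose domains $U_\epsilon\in\cJ^\infty$ with $\overline{U_\epsilon}\subset U$ exhausting $U$ (pushing $\partial U$ slightly inward). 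Since $\partial U_\epsilon\cap\partial\D=\emptyset$, Theorem \ref{thmSingularValues} gives $\norm{R_\epsilon}_e=0$, so $G_\epsilon=R_\epsilon^*R_\epsilon$ is compact and the classical Fisher--Micchelli theorem \cite{FisherMicchelli80,Fisher83} applies: its eigenvalues are simple, and the eigenfunction for the $(n+1)$-st largest has exactly $n$ zeros in $\D$. A spectral-stability argument shows that the eigenvalues of $G_\epsilon$ above $1$ converge to those of $G$, with the normalized eigenfunctions converging in $H^2(\D)$ and, by the uniform continuation above together with normal families, locally uniformly on $\D$. Hurwitz's theorem then bounds the zeros of a limiting eigenfunction on each compact subset by $n$, so the eigenfunction for $\mu_n$ has at most $n$ zeros; the appearance of ``at most'' rather than ``exactly'' reflects exactly that zeros may escape to $\partial\D$ or collide in the limit.

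The harder part is the simplicity statement (1), which is asserted for every eigenvalue $\mu>0$ and so is not covered by the limiting argument alone (simple eigenvalues of $G_\epsilon$ may coalesce as $\epsilon\to 0$). Here I would argue directly from the analytic structure: the eigenequation together with the identity $\bar\zeta=1/\zeta$ on $\partial\D$ couples $f$ and $\overline{f(1/\bar z)}$ into a Riemann--Hilbert problem across $\partial U$, whose index governs both the dimension of the solution space and the winding number, hence the zero count, of $f$. Making this precise---verifying the oscillation/total-positivity character of the kernel of $G$ after conformal transfer to $\partial U$, and adapting the Gantmacher--Krein-type conclusion (simple spectrum and $n$ sign changes) from the compact to the present non-compact operator---is the main obstacle, and is the step where the persistence of the Fisher--Micchelli phenomenon must genuinely be established.

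Granting (1) and (2), the final assertion is immediate: for $U\subsetneq V$ the inclusion $\iota$ lies in $\cJ^\infty(U,V)$ with $\iota(U)=U\ne V$, and $R=W_\iota$ for the positive square root of $\iota^\prime$, so the theorem applies verbatim to $R$.
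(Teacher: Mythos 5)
Your overall architecture matches the paper's: reduce to the restriction operator via Proposition \ref{propRModulus} and Riemann maps, continue eigenfunctions analytically across $\partial\D$ away from $\partial U\cap\partial\D$, and prove the zero count in part (2) by approximating with compactly contained domains, invoking the classical Fisher--Micchelli theorem, and passing to the limit with Hurwitz. But there are two genuine gaps. The first and most serious is part (1): you explicitly leave the simplicity of the eigenvalues as ``the main obstacle,'' proposing an unexecuted Riemann--Hilbert/total-positivity program. The paper's argument here is short and you already have its ingredients: writing $S$ for the interior of $\partial\D\setminus\partial U$ (nonempty since $\varphi(U)\ne V$), the eigenequation yields, as in Fisher's Theorem 6.2, the identity $\lambda^2\abs{f(z)}^2=\int_{\partial U}\abs{f(w)}^2\,\Re\bigl(\frac{z+w}{z-w}\bigr)\,ds(w)>0$ for $z\in S$, because the Poisson-type kernel is nonnegative on $\partial U$ and strictly positive on $\partial U\cap\D$. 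Hence no nonzero eigenfunction vanishes anywhere on $S$; if an eigenspace had dimension two, a suitable linear combination would vanish at a chosen point of $S$, a contradiction. This works for every nonzero eigenvalue, including those at or below the essential norm, which (as you correctly observe) the limiting argument cannot reach.

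The second gap is your ``spectral-stability argument'' in part (2). The compact approximants $G_\epsilon=R_\epsilon^*R_\epsilon$ converge to $G$ only in the strong operator topology (norm convergence is impossible since $G$ is not compact), and strong convergence does not in general imply convergence of eigenvalues or spectral projections. The paper closes this by choosing the specific radial approximation $\varphi_t(z)=\varphi(tz)$, so that $W_{\varphi_t}=W_{\rho_{t/s}}W_{\varphi_s}$ with $W_{\rho}$ a positive contraction; this monotonicity combined with Lemma \ref{lemNormExteriorPower} gives $\norm{\Lambda^n(W_{\varphi_t})}\to\norm{\Lambda^n(W_\varphi)}$, hence $s_n(W_{\varphi_t})\to s_n(W_\varphi)$ for each $n$, and then isolation of the eigenvalues above the essential norm lets one use Riesz projections $P_{n,t}=\int_{\abs{z-\lambda_n}=\epsilon}(z-\abs{W_{\varphi_t}})^{-1}\,dz$ to get convergence of eigenfunctions. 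An arbitrary inward-pushed exhaustion $U_\epsilon$ does not obviously carry this monotone factorization, so you would need to either adopt the radial family or supply an alternative justification for the convergence of eigenvalues and eigenprojections. The final reduction to the restriction operator and the ``at most $n$ zeros'' conclusion via Hurwitz are fine as you state them.
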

\begin{proof}
By composing $\varphi$ with Riemann maps for $U$ and $V$ (which corresponds to composing $W_\varphi$ with unitary weighted composition operators) we may assume without loss of generality that $U = V = \D$.
We will reuse the symbol $U$ for the domain $\varphi(\D)$.

If $\overline{U} \subset \D$, this result is the usual Fisher-Micchelli phenomenon for compact restriction operators (see \cite[Thm. 6.2]{Fisher83}).
Thus we will assume that $\partial U \cap \partial \D \ne \emptyset$, in which case $\norm{W_\varphi}_e = 1$ by Theorem \ref{thmSingularValues}.
Now suppose that $\abs{W_\varphi}f = \abs{R}f = \lambda f$ for a non-zero function $f$.
Since $R$ is injective we must have $\lambda \ne 0$.
Then for $z \in \D$,
\begin{equation}\label{eqnRestrictionSinguarValue}
f(z) = \lambda^{-2} (R^*Rf)(z) = \lambda^{-2} \int_{\partial U} \frac{f(w)}{1-z \overline{w}} \, ds(w),
\end{equation}
where $ds$ is arclength measure on $\partial U$.

Let $S$ be the interior of $\partial \D \setminus \partial U$, which is non-empty by assumption.
From \eqref{eqnRestrictionSinguarValue} we can see that $f$ extends holomorphically to a neighborhood of $S$.
Arguing as in \cite[Thm. 6.2]{Fisher83}, one can show that for $z \in S$ we have
$$
\lambda^2 \abs{f(z)}^2 = \int_{\partial U} \abs{f(w)}^2 \Re \left( \frac{z+w}{z-w} \right) \, ds(w) > 0,
$$
where we used that $\Re \left( \frac{z+w}{z-w} \right)$ is nonnegative on $\partial U$ and non-vanishing in $\partial U \cap \D$.
Since $f$ cannot vanish at any point of $S$, as an immediate consequence we have that the eigenvalue $\lambda^2$ of $R^* R$ has multiplicity one.

We will study the eigenfunctions of $\abs{W_\varphi} = \abs{R}$ by approximating $U$ by domains $U_t$ which are compactly contained in $\D$.
Specifically, for $t \in [0,1]$ let $\varphi_t:\D \to \D$ be the map $\varphi_t(z) = \varphi(tz)$.
Let $U_t = \varphi_t(\D)$ and let $R_t:H^2(\D) \to H^2(U_t)$ be the restriction operator.
By Proposition \ref{propRModulus} we have $\abs{R_t} = \abs{W_{\varphi_t}}$.

For $t \in [0,1]$ let $\rho_t:\D \to \D$ be given by $\rho_t(z) = tz$.
Hence $W_{\varphi_t} = W_{\rho_t} W_{\varphi}$.
Since $W_{\rho_t} \to 1$ in the strong operator topology as $t \uparrow 1$, we have $W_{\varphi_t} \to W_\varphi$ strongly as well.
Moreover, if $s \ge t > 0$ we have $W_{\rho_{t/s}}W_{\varphi_s} = W_{\varphi_t} $.
Since $0 \le W_{\rho_{t/s}} \le 1$, we have $\norm{W_{\varphi_s}g} \ge \norm{W_{\varphi_t}g}$ for all $g \in H^2(\D)$.
Hence $\norm{W_{\varphi_t}} \to \norm{W_\varphi}$ as $t \uparrow 1$.

We can now repeat this argument with the exterior powers $\Lambda^n (W_{\varphi_t})$ to obtain convergence of the smaller singular values as well.
Since $W_{\varphi_t} \to W_{\varphi}$ strongly and $\norm{W_{\varphi_t}}$ is bounded, we have $\Lambda^n(W_{\varphi_t}) \to \Lambda^n(W_{\varphi})$ strongly.
When $1 \ge s \ge t > 0$ we have $\Lambda^n(W_{\rho_{t/s}}) \Lambda^n(W_{\varphi_s}) = \Lambda^n(W_{\varphi_t})$ and $\Lambda^n(W_{\rho_{t/s}}) \le 1$, so that $\norm{\Lambda^n(W_{\varphi_t})} \to \norm{\Lambda^n (W_\varphi)}$ as $t \uparrow 1$.
By Lemma \ref{lemNormExteriorPower}, this means that 
$$
\prod_{j=1}^n s_j(W_{\varphi_t}) \to \prod_{j=1}^n s_j(W_{\varphi}).
$$
Since $s_j(W_\varphi) \ge 1$ for all $j$, it follows that $s_n(W_{\varphi_t}) \to s_n(W_\varphi)$ for all $n \ge 1$.

Now suppose $\lambda_n = s_{n-1}(W_\varphi)$ is an eigenvalue of $\abs{W_\varphi}$ and $\lambda_n > 1$.
Let $\lambda_{0,t} > \lambda_{1,t} > \cdots$ be the sequence of eigenvalues of $\abs{W_{\varphi_t}}$.
Note that for all $t \in [0,1]$ and $k \ge n$, the eigenvalue $\lambda_{k,t}$ has multiplicity one and the function $t \mapsto s_k(W_{\varphi_t})$ is continuous in $t$. 
Thus we can choose a small neighborhood $(\lambda_n - \epsilon, \lambda_n + \epsilon)$ such that 
$$
\sigma(\abs{W_{\varphi_t}}) \cap (\lambda_n - 2\epsilon, \lambda_n + 2\epsilon) = \{\lambda_{n,t}\}
$$
for $t$ sufficiently close to $1$.
For such $t$, the rank-one projection $P_{n,t}$ onto the eigenspace $\ker (\lambda_{n,t} - \abs{W_{\varphi_t}})$ is given by
$$
P_{n,t} = \int_{\abs{z-\lambda_n}=\epsilon} (z - \abs{W_{\varphi_t}})^{-1} \, dz.
$$

Since $W_{\varphi_t}^* = W_{\varphi}^* W_{\rho_t}$, we have $W_{\varphi_t}^* \to W_{\varphi}^*$ as $t \uparrow 1$, and since multiplication is jointly strongly continuous on bounded sets, we have $W_{\varphi_t}^*W_{\varphi_t} \to W_\varphi^*W_\varphi$ strongly as $t \uparrow 1$, and thus $\abs{W_{\varphi_t}} \to \abs{W_\varphi}$.
We may exchange this limit with the integral giving $P_{n,t}$ to obtain $P_{n,t} \to P_{n,1}$ strongly as $t \uparrow 1$.
Thus we may choose unit norm eigenfunctions $f_{n,t}$ corresponding to the eigenvalue $\lambda_{n,t}$ for $\abs{W_{\varphi,t}}$ such that $f_{n,t} \to f_{n,1}$ as $t \uparrow 1$.
Since $\abs{W_{\varphi_t}} = \abs{R_t}$, by the Fisher-Micchelli theorem each $f_{n,t}$ has exactly $n$ zeros in $\D$.
Thus an application of the argument principle yields that $f_{n,1}$ has at most $n$ zeroes in $\D$.
\end{proof}

\bibliographystyle{alpha}
\bibliography{wcobib} 

\end{document}